\documentclass[11pt]{article}
\usepackage[a4paper,margin=28mm]{geometry}
\usepackage{amsmath,amssymb,amsthm,mathtools}
\usepackage{fontspec,microtype,booktabs}
\usepackage[backend=biber,style=numeric-comp,sorting=nyt,
  giveninits=true,maxbibnames=99,doi=true,isbn=false,url=true,eprint=true]{biblatex}
\usepackage[colorlinks=true,linkcolor=blue!55!black,citecolor=blue!55!black,
urlcolor=blue!55!black]{hyperref}
\usepackage{xcolor}
\numberwithin{equation}{section}
\newtheorem{theorem}{Theorem}[section]
\newtheorem{lemma}[theorem]{Lemma}
\newtheorem{proposition}[theorem]{Proposition}

\theoremstyle{remark}
\newtheorem{remark}[theorem]{Remark}
\newcommand{\KR}{\mathsf{KR}}
\newcommand{\ord}{\operatorname{ord}}
\title{The three Kanade--Russell identities modulo nine}
\author{Yuma Mizuno}
\date{}
\begin{document}
\maketitle
\begin{abstract}
We prove the three symmetric Kanade--Russell identities modulo nine.
\end{abstract}

\section{Introduction}

\begin{theorem}\label{thm:KR-evaluations}
As formal power series in \(q\), and as holomorphic identities on
\(|q|<1\),
\begin{equation}\label{eq:KR-evaluations}
\begin{alignedat}{2}
 &\sum_{m,n\geq0}\frac{q^{m^2+3mn+3n^2\phantom{+0n+0n}}}{(q;q)_m(q^3;q^3)_n}
 &&=\frac1{(q,q^3,q^6,q^8;q^9)_\infty},\\
 &\sum_{m,n\geq0}\frac{q^{m^2+3mn+3n^2+m+3n}}{(q;q)_m(q^3;q^3)_n}
 &&=\frac1{(q^2,q^3,q^6,q^7;q^9)_\infty},\\
 &\sum_{m,n\geq0}\frac{q^{m^2+3mn+3n^2+2m+3n}}{(q;q)_m(q^3;q^3)_n}
 &&=\frac1{(q^3,q^4,q^5,q^6;q^9)_\infty},
\end{alignedat}
\end{equation}
where
\[
 (a;q)_n=\prod_{j=0}^{n-1}(1-aq^j),\qquad
 (a_1,\ldots,a_s;q)_\infty=\prod_{i=1}^s\prod_{j\geq0}(1-a_iq^j).
\]
\end{theorem}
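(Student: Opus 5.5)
Write $F_a(q)$, $a=0,1,2$, for the three double sums, with linear exponents $(0,0)$, $(1,3)$, $(2,3)$, and $G_a(q)$ for the corresponding products. My plan is to eliminate the outer sum over $n$ by summing over $m$ first. The factor $(q^3;q^3)_n$ and the coefficient $3$ in $3mn$ suggest splitting the inner variable $m$ by its residue mod $3$, or equivalently dissecting the whole sum by $m \bmod 3$.

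The route I would follow is a functional-equation (Andrews--Baxter style) argument rather than a direct transformation.

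\emph{Step 1: a bivariate refinement.} Set
\[
F(x)=\sum_{m,n\ge0}\frac{x^{m+3n}q^{m^2+3mn+3n^2}}{(q;q)_m(q^3;q^3)_n}.
\]
Then $F_0=F(1)$, $F_1=F(q)$, and $F_2$ is recovered as a $q$-shifted combination of these.

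\emph{Step 2: $q$-difference equations.} Using $1/(q;q)_m-1/(q;q)_{m-1}=q^m/(q;q)_m$ and the analogous identity in base $q^3$, I derive a linear $q$-difference equation for $F(x)$ relating $F(x),F(xq),\dots,F(xq^k)$. Each shift $x\mapsto xq$ produces the factor $q^{m+3n}$, which matches the derivative of the quadratic form in the direction $(1,1)$ after rescaling. I would then show that the products, suitably extended to $x$ (for instance via the Rogers--Ramanujan-type sum side for the modulus $9$ Andrews--Gordon/Bressoud family), satisfy the same system with the same initial conditions. Uniqueness of power series solutions with given constant term then yields the identities.

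The main obstacle is that the standard sum side attached to $(q,q^3,q^6,q^8;q^9)_\infty$ is not of Andrews--Gordon form with a simple recurrence.

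\emph{Fallback: reduce to an identity with known product side.} As an alternative to Step 2, I would sum over $m$ using Euler's formula after the substitution $m=3k+r$. A cleaner option is to apply the constant-term method. Write $1/(q;q)_m$ via the $q$-binomial theorem as the coefficient of $z^m$ in $1/(z;q)_\infty$. The $n$-sum then becomes a sum over $q^3$ of the shape $\sum_n q^{3n^2}(\dots)^n/(q^3;q^3)_n$, which Euler evaluates as $(-zq^{\ast};q^3)_\infty$-type products. The target is a constant term of a product of theta-like factors, which Jacobi triple product and the quintuple product identity in base $q^3$ should reduce to modulus $9$ products.

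Concretely, for the fallback I would:
\begin{enumerate}
\item write $q^{m^2}/(q;q)_m$ through its generating function in a variable $z$;
\item absorb $q^{3mn}$ by $z\mapsto zq^{3n}$;
\item sum over $n$ via Euler;
\item extract the constant term using the triple product;
\item finish with the quintuple product identity.
\end{enumerate}

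I expect the hardest step to be this final theta-function identification. It requires matching a $3$-dissection of a theta quotient to $1/(q^a,q^{3},q^{6},q^{9-a};q^9)_\infty$, and it is here that the factor $(q^3,q^6;q^9)_\infty$ must emerge from the quintuple product. To be safe, I would verify the identities numerically to $O(q^{200})$ before committing to the algebra.
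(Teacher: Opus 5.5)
Your proposal has a genuine gap: neither route contains the step that actually links the sums to the products.

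\textbf{Step 2.} Uniqueness of power-series solutions is only usable once you have an explicit $x$-deformation of $1/(q,q^3,q^6,q^8;q^9)_\infty$ (and of the other two products) that satisfies the same $q$-difference equation as $F(x)=\mathsf F(x,x^3)$, with the same initial data. You do not supply one. As you note yourself, no Andrews--Gordon-type sum side with a usable recurrence is available for these products, so the argument has nothing to compare $F$ with. Your Step~1 is fine: one checks that $\mathsf C=F(q^2)+q^{-1}\bigl((1+q)F(q)-F(1)\bigr)$. The paper does use exactly this kind of uniqueness argument, but only for auxiliary identities where both sides are explicit series: the bilinear identities for $\mathsf G$ and the Casoratian formula for $\mathsf C$.

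\textbf{The fallback.} It fails at step (3). You claim that $\sum_n q^{3n^2}w^n/(q^3;q^3)_n$ is evaluated by Euler. It is not: it equals $\mathcal R_{q^3}(-w)$, a Rogers--Ramanujan-type series with no product form. Euler's identity in base $q^3$ needs the exponent $\tfrac32 n(n-1)$, giving $\sum_n q^{3n(n-1)/2}w^n/(q^3;q^3)_n=(-w;q^3)_\infty$. Absorbing $q^{3mn}$ into $z$ instead leaves the $m$-sum as $\mathcal R_q(-zq^{3n})$. Extending $m$ to $\mathbb Z$ and completing the square leaves $q^{3n^2/4}$ in the $n$-sum. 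No other single split helps either. Writing $m^2+3mn+3n^2=\alpha(m+cn)^2+\beta m^2+\gamma n^2$ forces $\gamma=3-\tfrac{9}{4(1-\beta)}$. The summable choices $\beta\in\{0,\tfrac12\}$ then give $\gamma\in\{\tfrac34,-\tfrac32\}$, never $0$ or $\tfrac32$. So step (4) has no theta quotient to act on, and the quintuple-product ``hardest step'' is never reached. A check to $O(q^{200})$ is evidence, not proof.

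\textbf{The missing idea.} Stop trying to transform each sum into its product. The paper shows that the cubic $\mathcal N(X,Y,Z)=X^3+qY^3-q^2Z^3+3qXYZ$ takes the same value $\mathfrak a(q)/((q;q)_\infty(q^3;q^3)_\infty)$ at $(\mathsf A,\mathsf B,\mathsf C)$ and at $(\KR_1,\KR_2,\KR_3)$.
For the sums, it writes $\mathsf G(1,1)$, $\mathsf G(q^{-1},q^3)$ and $\mathsf G(q,1)$ quadratically in $\mathsf A,\mathsf B,\mathsf C$. It expresses $\mathsf C$ as a Casoratian of $q$-Airy-type series, and takes a determinant in Morita's connection formula.
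For the products, it uses the Weierstrass addition formula and Lambert-series identities.
Tsuchioka's coefficientwise inequalities then make $\mathsf A-\KR_1$, $\mathsf B-\KR_2$ and $\mathsf C-\KR_3$ nonnegative. At the first coefficient where the triples differed, the two values of $\mathcal N$ would differ by a positive amount.
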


Kanade and Russell used symbolic computation to discover new
Rogers--Ramanujan type partition identities and proposed conjectures
modulo nine and twelve in 2014 \cite{KanadeRussell2015}.
Subsequent work established several of their conjectures; in particular,
Bringmann, Jennings-Shaffer, and Mahlburg proved seven identities modulo
twelve \cite{BringmannJenningsShafferMahlburg2020}.
The three partition conjectures modulo nine corresponding to
\eqref{eq:KR-evaluations}, however,
remained unproved for more than a decade, despite the close resemblance
of their partition conditions and product forms to those of the classical
Rogers--Ramanujan identities.
For work on these conjectures, see, for example,
\cite{KanadeRussell2019,Kursungoz2019,ChernLi2020,Chern2020,
UncuZudilin2021,Konenkov2024,Tsuchioka2022,Mizuno2025,WangZhang2025}.
In this paper, we prove \eqref{eq:KR-evaluations}.

Kanade and Russell originally formulated these conjectures as equalities
between numbers of partitions \cite{KanadeRussell2015}. For \(i=1,2,3\),
let \(\mathcal D_i\) consist of partitions
\(\lambda_1\leq\cdots\leq\lambda_\ell\) whose parts are at least
\(i\), such that
\(\lambda_{j+2}-\lambda_j\geq3\), and such that
\(\lambda_j+\lambda_{j+1}\equiv0\pmod3\) whenever
\(\lambda_{j+1}-\lambda_j\leq1\).
Kanade and Russell conjectured that the
number of partitions of each integer in \(\mathcal D_i\) equals the number
with parts congruent to \(2^{i-1},3,6,9-2^{i-1}\pmod9\).
Kur\c{s}ung\"oz later expressed the generating functions for
\(\mathcal D_1,\mathcal D_2,\mathcal D_3\) as the three double sums in
\eqref{eq:KR-evaluations}, respectively
\cite[Theorems 7--9]{Kursungoz2019}.
Thus these partition conjectures are equivalent to
\eqref{eq:KR-evaluations}.

Together with the result of Wang and Zhang
\cite[Theorem 1.1]{WangZhang2025}, Theorem~\ref{thm:KR-evaluations}
also implies that the triple of sum sides in \eqref{eq:KR-evaluations}, suitably normalized,
is a weight-zero vector-valued
modular function on \(\Gamma_0(3)\).

We now outline the proof.
For the proof, define
\begin{equation}\label{eq:source-definition}
 \mathsf F(x,y)=\sum_{m,n\geq0}
 \frac{q^{m^2+3mn+3n^2}x^my^n}{(q;q)_m(q^3;q^3)_n},\qquad
 (\mathsf A,\mathsf B,\mathsf C)
 =\bigl(\mathsf F(1,1),\mathsf F(q,q^3),\mathsf F(q^2,q^3)\bigr).
\end{equation}
Write \(\KR_1,\KR_2,\KR_3\) for the products on the right-hand sides of \eqref{eq:KR-evaluations}, respectively.

The key is that the triple of sums \((\mathsf A,\mathsf B,\mathsf C)\)
and the triple of products \((\KR_1,\KR_2,\KR_3)\) give the same value
when substituted into the cubic polynomial
\[
 \mathcal N(X,Y,Z)=X^3+qY^3-q^2Z^3+3qXYZ.
\]
Together with the coefficientwise lower bounds
\(\mathsf A\geq\KR_1\), \(\mathsf B\geq\KR_2\), and
\(\mathsf C\geq\KR_3\), this equality forces the two triples to coincide.
Sections~\ref{app:addition}--\ref{app:Morita} establish the
identities needed to evaluate this polynomial at \((\mathsf A,\mathsf B,\mathsf C)\).
Section~\ref{sec:product} evaluates the cubic at the products using
the Jacobi addition formula and classical identities for theta and Lambert series. Section~\ref{sec:positive-completion}
combines the resulting equal cubic values with Tsuchioka's proved
coefficientwise inequalities. The first coefficient at which the
triples could differ gives a contradiction.

The day after this manuscript was prepared\footnote{The chronology can be checked
against the timestamp of the immutable release at
\url{https://github.com/yuma-mizuno/kanade-russell-mod9}.}, Xia's preprint \cite{Xia2026} appeared on arXiv.
It independently proves all five modulo nine Kanade--Russell sum-product identities.

\paragraph*{Acknowledgments.}
The author thanks Ernest X. W. Xia for explaining how he arrived at his proof in \cite{Xia2026}.
The author was supported by the Irish Research Council Advanced Laureate Award
IRCLA/2023/1934 held by Robert Osburn.
The author used GPT-5.6 Sol and GPT-6 Astra in searching for the proof and preparing the manuscript.
The process is described in the final section, \nameref{sec:ai-discovery}.

\section{Bilinear identities}
\label{app:addition}

Retain the definition of \(\mathsf F\) in
\eqref{eq:source-definition}, and define
\[
 \mathsf G(x,y)=\sum_{m,n\geq0}
 \frac{q^{m^2-3mn+3n^2}x^my^n}{(q;q)_m(q^3;q^3)_n}.
\]

\begin{proposition}[Bilinear identities]\label{prop:app-addition}
The following identities hold.
\begin{align}
 \mathsf G(x,1)
 & =\mathsf F(x,1)\mathsf F(x,x^3)
       +qx^2\mathsf F(qx,q^3)\mathsf F(q^2x,q^3x^3),
       \label{eq:app-addition-0}\\
 \mathsf G(qx,1)
 & =\mathsf F(x,1)\mathsf F(qx,q^3x^3)
       -qx\mathsf F(q^2x,q^3)\mathsf F(q^2x,q^3x^3),
       \label{eq:app-addition-plus}\\
 \mathsf G(q^{-1}x,q^3)
 & =x\mathsf F(qx,q^3)\mathsf F(qx,q^3x^3)
       +\mathsf F(q^2x,q^3)\mathsf F(x,x^3).
       \label{eq:app-addition-minus}
\end{align}
\end{proposition}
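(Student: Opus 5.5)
The plan is to prove all three identities by the same mechanism: regard both sides as power series in \(x\) with coefficients in \(\mathbb Z[[q]]\), show that the two sides satisfy the same \(q\)-difference system in \(x\), and check that this system has a unique power-series solution with prescribed constant term. The basic input consists of the elementary shift relations for \(\mathsf F\) and \(\mathsf G\), obtained by splitting \(1=(1-q^m)+q^m\) (resp.\ \(1=(1-q^{3n})+q^{3n}\)) and shifting \(m\mapsto m+1\) (resp.\ \(n\mapsto n+1\)):
\[
 \mathsf F(x,y)-\mathsf F(qx,y)=qx\,\mathsf F(q^2x,q^3y),\qquad
 \mathsf F(x,y)-\mathsf F(x,q^3y)=q^3y\,\mathsf F(q^3x,q^6y),
\]
\[
 \mathsf G(x,y)-\mathsf G(qx,y)=qx\,\mathsf G(q^2x,q^{-3}y),\qquad
 \mathsf G(x,y)-\mathsf G(x,q^3y)=q^3y\,\mathsf G(q^{-3}x,q^6y).
\]
Since the three left-hand sides of \eqref{eq:app-addition-0}--\eqref{eq:app-addition-minus} are \(\mathsf G(x,1)\), \(\mathsf G(qx,1)\) and \(\mathsf G(q^{-1}x,q^3)\), these relations link them. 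For instance, the first relation with \(y=1\) gives \(\mathsf G(x,1)-\mathsf G(qx,1)=qx\,\mathsf G(q^2x,q^{-3})\), and the second, applied at \((q^{2}x,q^{-3})\), expresses \(\mathsf G(q^2x,q^{-3})\) through \(\mathsf G(q^2x,1)\) and \(\mathsf G(q^{-1}x,q^{3})\). So the vector \(\bigl(\mathsf G(x,1),\mathsf G(qx,1),\mathsf G(q^{-1}x,q^3)\bigr)\) satisfies a closed linear system \(V(x)=M(x)V(qx)\) (or \(V(qx)=\dots\)) whose coefficients are polynomials in \(x\) and \(q^{\pm1}\).

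First I would derive this \(3\times3\) system explicitly for the left-hand sides. Next I would verify that the vector of right-hand sides satisfies the same system. For this I would use the \(\mathsf F\)-relations, applied to the one-parameter families \(\mathsf F(q^ax,q^3)\) and \(\mathsf F(q^ax,q^{3b}x^3)\). The point is that along \(y=x^3\) the substitution \(x\mapsto qx\) shifts \(y\) by \(q^3\), so both \(\mathsf F\)-relations combine into a single \(x\)-shift relation for \(\mathsf F(x,x^3)\). Expanding a product like \(\mathsf F(x,1)\mathsf F(x,x^3)\) under \(x\mapsto qx\) then produces bilinear terms. These should recombine, using the same relations, into exactly the right-hand sides at shifted arguments. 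This is a finite algebraic check, though a bookkeeping-heavy one.

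Uniqueness comes from the constant terms in \(x\). Both sides agree at \(x=0\): there all \(\mathsf F(0,y)\), \(\mathsf G(0,y)\) reduce to \(\sum_n q^{3n^2}y^n/(q^3;q^3)_n\), and the three identities become Rogers--Ramanujan-type statements in \(q^3\) that are checked directly. Comparing coefficients of \(x^k\) in \(V(x)=M(x)V(qx)\) gives \((I-q^kM(0))v_k=\text{(terms in }v_j,\ j<k)\). Whenever \(I-q^kM(0)\) is invertible over \(\mathbb Z((q))\) for \(k\ge1\), the coefficients \(v_k\) are determined recursively, which proves the identities.

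I expect the main obstacle to be the second step: showing that the bilinear right-hand sides satisfy the same system, since it is not a priori clear that products of the chosen \(\mathsf F\)-values close up under the shift relations. If that closure fails, my fallback is direct coefficient extraction. I would extract the coefficient of \(x^k\) on both sides, sum out the \(m\)-variables with the \(q\)-binomial theorem, and reduce each identity to a terminating \(q\)-Chu--Vandermonde-type summation in the remaining \(n\)-indices, treating the modulus-\(3\) base \((q^3;q^3)_n\) by splitting \(m\) into residue classes.
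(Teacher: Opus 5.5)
\textbf{Gap: the bilinear right-hand sides are never shown to satisfy the system.} Your overall plan matches the paper's, and your uniqueness half is sound. The $\mathsf G$ shift relations give $V(x)=M(x)V(qx)$ with $M(x)=\begin{pmatrix}1+qx^2&qx&qx\\1&0&0\\x&0&1\end{pmatrix}$, so $\det(I-q^kM(0))=(1-q^k)^2$. Both sides have constant terms $(R_0,R_0,R_1)$, where $R_j=\sum_n q^{3n^2+3jn}/(q^3;q^3)_n$; this match is trivial, and no Rogers--Ramanujan-type statement is involved. Your system is equivalent to the paper's scalar equation \eqref{eq:app-scalar-equation} for $g(x)=\mathsf G(x,1)$ together with $qx\,\mathsf G(q^{-1}x,q^3)=g(x)-g(qx)-qxg(q^2x)$.

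The missing step is where all the content lies. ``These should recombine'' is a hope, and you say yourself you are unsure the products close up. Expanding products under $x\mapsto qx$ with the shift relations, ad hoc, gives no procedure guaranteed to end in a checkable identity. You need a fixed finite basis in which recombination becomes a polynomial matrix identity.

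\textbf{How the paper supplies this.} The vector $\mathbf f(x)=(\mathsf F(x,1),\mathsf F(qx,q^3),\mathsf F(q^2x,q^3))^{\mathsf T}$ satisfies $\mathbf f(qx)=L(x)\mathbf f(x)$. All second factors are combinations of $p(x),p(qx),p(q^2x)$ with $p(x)=\mathsf F(qx,x^3)$; for example $\mathsf F(x,x^3)=p(x)+qxp(qx)$ and $\mathsf F(q^2x,q^3x^3)=p(qx)$. Since $p$ satisfies a third-order recurrence, $\mathbf p(x)=(p(x),p(qx),p(q^2x))^{\mathsf T}$ obeys $\mathbf p(x)=T(x)\mathbf p(qx)$. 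Then $h(x)$, $h(qx)$ and $qx\,h(q^2x)$ equal $\mathbf f(x)^{\mathsf T}K_i(x)\mathbf p(x)$ for $i=0,1,2$. Everything reduces to the explicit checks $K_1T=L^{\mathsf T}K_0(qx)$, $K_2T=qxL^{\mathsf T}K_1(qx)$, and the certificate \eqref{eq:app-scalar-certificate}.

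With this organization only the first identity needs a uniqueness argument. The second is the representation $h(qx)=\mathbf f^{\mathsf T}K_1\mathbf p$ read through the bridges. The third follows from the matrix $K_0-K_1-K_2$. Without such certificates, or an equivalent explicit computation, your proposal is a strategy rather than a proof.

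\textbf{The fallback does not close the gap.} The coefficients of $x^k$ in $\mathsf G(x,1)$, $\mathsf F(x,1)$ and $\mathsf F(qx,q^3)$ are $q^{k^2}R_{-k}/(q;q)_k$, $q^{k^2}R_k/(q;q)_k$ and $q^{k^2+k}R_{k+1}/(q;q)_k$. These are non-terminating sums. Coefficient extraction therefore gives, for each $k$, relations among the $R_j$, reducible to $R_0,R_1$ via $R_j-R_{j+1}=q^{3j+3}R_{j+2}$. It does not give a terminating $q$-Chu--Vandermonde evaluation, and you name no summation that would settle these relations uniformly in $k$.
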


\begin{proof}
We first prove \eqref{eq:app-addition-0} by showing that both sides satisfy the same
\(q\)-difference equation and have the same constant and linear coefficients.
We then derive \eqref{eq:app-addition-plus} and \eqref{eq:app-addition-minus} using shift relations.

For each integer \(j\), define
\[
 R_j=\sum_{n\geq0}\frac{q^{3n^2+3jn}}{(q^3;q^3)_n}.
\]
Termwise subtraction and a shift of \(n\) give
\begin{equation}\label{eq:app-R-contiguity}
 R_j-R_{j+1}=q^{3j+3}R_{j+2}.
\end{equation}
If \(g(x)=\mathsf G(x,1)=\sum_{j\geq0}g_jx^j\), then
\(g_j=q^{j^2}R_{-j}/(q;q)_j\).  Equation
\eqref{eq:app-R-contiguity} implies, for \(j\geq2\),
\[
 (1-q^j)(1-q^{j-1})g_j
 =q^{2j-1}(1-q^{j-1})g_{j-1}+q^{j-1}g_{j-2}.
\]
Equivalently,
\begin{equation}\label{eq:app-scalar-equation}
 qg(x)-(1+q+q^2x^2)g(qx)
       +(1-q^2x)g(q^2x)+q^2xg(q^3x)=0.
\end{equation}
For \(j\geq2\), the factor multiplying \(g_j\) in
\eqref{eq:app-scalar-equation} is \(q(1-q^j)(1-q^{j-1})\ne0\).
Thus a solution is uniquely determined by its constant and linear coefficients. For \(g\), we have
\begin{equation}\label{eq:app-g-initial}
 g(x)=R_0+\frac{q(R_0+R_1)}{1-q}x+O(x^2).
\end{equation}

The series \(\mathsf F\) satisfies the shift relations
\begin{equation}\label{eq:app-F-contiguities}
\begin{aligned}
 \mathsf F(x,y)-\mathsf F(qx,y)
       &=qx\mathsf F(q^2x,q^3y),\\
 \mathsf F(x,y)-\mathsf F(x,q^3y)
       &=q^3y\mathsf F(q^3x,q^6y).
\end{aligned}
\end{equation}
Define the column vector
\[
 \mathbf f(x)=\begin{pmatrix}
 \mathsf F(x,1)\\
 \mathsf F(qx,q^3)\\
 \mathsf F(q^2x,q^3)
 \end{pmatrix}.
\]
Using \eqref{eq:app-F-contiguities} gives
\begin{equation}\label{eq:app-L-system}
 \mathbf f(qx)=L(x)\mathbf f(x),\qquad
 L(x)=\begin{pmatrix}1&0&-qx\\0&0&1\\-x&x&1+qx^2\end{pmatrix}.
\end{equation}

Define \(p(x)=\mathsf F(qx,x^3)\).
The shift relations \eqref{eq:app-F-contiguities} give
\begin{equation}\label{eq:app-p-recurrence}
 p(x)=(1-qx)p(qx)+qx(1+q+q^2x^2)p(q^2x)+q^4x^2p(q^3x).
\end{equation}
The column vector
\(\mathbf p(x)=(p(x),p(qx),p(q^2x))^{\mathsf T}\) therefore satisfies
\begin{equation}\label{eq:app-T-system}
 \mathbf p(x)=T(x)\mathbf p(qx),\qquad
 T(x)=\begin{pmatrix}
 1-qx&qx(1+q+q^2x^2)&q^4x^2\\1&0&0\\0&1&0
 \end{pmatrix}.
\end{equation}
The shift relations \eqref{eq:app-F-contiguities} also give
\begin{equation}\label{eq:app-p-bridges}
\begin{aligned}
 \mathsf F(x,x^3)&=p(x)+qx p(qx),\\
 \mathsf F(qx,q^3x^3)&=p(qx)+q^2x p(q^2x),\\
 \mathsf F(q^2x,q^3x^3)&=p(qx).
\end{aligned}
\end{equation}

Let \(h(x)\) be the right side of \eqref{eq:app-addition-0}.
Define three polynomial matrices by
\[
 K_0=\begin{pmatrix}1&qx&0\\0&qx^2&0\\0&0&0\end{pmatrix},\qquad
 K_1=\begin{pmatrix}0&1&q^2x\\0&0&0\\0&-qx&0\end{pmatrix},\qquad
 K_2=\begin{pmatrix}1&qx-1&-q^2x\\
                         0&0&-q^3x^3\\-qx&qx-q^2x^2&0\end{pmatrix}.
\]
Then
\begin{equation}\label{eq:app-h-bilinear}
\begin{aligned}
 h(x)&=\mathbf f(x)^{\mathsf T}K_0(x)\mathbf p(x),\\
 h(qx)&=\mathbf f(x)^{\mathsf T}K_1(x)\mathbf p(x),\\
 qx h(q^2x)&=\mathbf f(x)^{\mathsf T}K_2(x)\mathbf p(x).
\end{aligned}
\end{equation}
The first identity in \eqref{eq:app-h-bilinear} follows from
\eqref{eq:app-p-bridges}. The second and third identities are obtained as follows.
First, direct multiplication gives
\begin{equation}\label{eq:app-bilinear-certificate}
\begin{aligned}
 K_1(x)T(x)&=L(x)^{\mathsf T}K_0(qx),\\
 K_2(x)T(x)&=qxL(x)^{\mathsf T}K_1(qx).
\end{aligned}
\end{equation}
Contracting \eqref{eq:app-bilinear-certificate} with
\(\mathbf f(x)^{\mathsf T}\) and \(\mathbf p(qx)\), and using
\eqref{eq:app-L-system} and \eqref{eq:app-T-system}, gives the second and third
identities in \eqref{eq:app-h-bilinear}.

We now show that \(h\) satisfies \eqref{eq:app-scalar-equation}.
First, the following identity holds:
\begin{equation}\label{eq:app-scalar-certificate}
\begin{aligned}
 \bigl(q^2xK_0(x)-qx(1+q+q^2x^2)K_1(x)
       &+(1-q^2x)K_2(x)\bigr)T(x)\\
       &+qxL(x)^{\mathsf T}K_2(qx)=0.
\end{aligned}
\end{equation}
Contracting \eqref{eq:app-scalar-certificate} with
\(\mathbf f(x)^{\mathsf T}\) and \(\mathbf p(qx)\), and using
\eqref{eq:app-L-system}, \eqref{eq:app-T-system}, and \eqref{eq:app-h-bilinear},
gives \(qx\) times the left side of \eqref{eq:app-scalar-equation} with \(g=h\).
Cancelling \(qx\) shows that \(h\) satisfies \eqref{eq:app-scalar-equation}.

The second summand on the right side of \eqref{eq:app-addition-0} starts at \(x^2\).
Expanding the two factors of the first summand from their defining sums gives
\[
\begin{aligned}
 h(x)
 &=\left(R_0+\frac{qR_1}{1-q}x+O(x^2)\right)
   \left(1+\frac{q}{1-q}x+O(x^2)\right)+O(x^2)\\
 &=R_0+\frac{q(R_0+R_1)}{1-q}x+O(x^2).
\end{aligned}
\]
Comparing with \eqref{eq:app-g-initial}, \(h\) and \(g\) have the same
constant and linear coefficients. Since \(h\) and \(g\) satisfy
\eqref{eq:app-scalar-equation}, uniqueness gives \(h=g\).

The second identity in \eqref{eq:app-h-bilinear} and \eqref{eq:app-p-bridges} now give
\eqref{eq:app-addition-plus}. For the last identity, termwise
subtraction gives \(g(x)-g(qx)=qx\mathsf G(q^2x,q^{-3})\).
The relation \(R_{-j-1}=R_{-j}+q^{-3j}R_{1-j}\), from
\eqref{eq:app-R-contiguity}, then gives
\[
 qx\mathsf G(q^{-1}x,q^3)=g(x)-g(qx)-qxg(q^2x).
\]
Finally, multiplying the matrix identity
\[
 K_0-K_1-K_2
       =qx\begin{pmatrix}0&0&0\\0&x&q^2x^2\\1&qx&0\end{pmatrix}
\]
on the left by \(\mathbf f(x)^{\mathsf T}\) and on the right by \(\mathbf p(x)\),
using \eqref{eq:app-h-bilinear} and \eqref{eq:app-p-bridges}, and cancelling \(qx\) proves
\eqref{eq:app-addition-minus}.
\end{proof}

At \(x=1\), Proposition~\ref{prop:app-addition} yields
\begin{equation}\label{eq:app-quadratic-bridge}
 \mathsf G(1,1)=\mathsf A^2+q\mathsf B\mathsf C,\qquad
 \mathsf G(q^{-1},q^3)=\mathsf B^2+\mathsf A\mathsf C,\qquad
 \mathsf G(q,1)=\mathsf A\mathsf B-q\mathsf C^2.
\end{equation}
\begin{remark}
Expressions for the three sums on the left in terms of infinite products were conjectured
by Wang and Wang~\cite[Conjecture 3.7]{WangWang2026}.
The identities \eqref{eq:app-quadratic-bridge}, together with Theorem~\ref{thm:KR-evaluations}, prove their conjecture.
To match Wang and Wang's third formula, we rewrite its right-hand side
\(\KR_1^2\KR_3/\KR_2-2q\KR_3^2\) as \(\KR_1\KR_2-q\KR_3^2\)
using the classical theta identity
\[
 \KR_1\KR_2^2-\KR_1^2\KR_3+q\KR_2\KR_3^2=0.
\]
\end{remark}

\section{Casoratian}
\label{app:casoratian}

Define the \(q\)-Airy function with base \(p\) by
\[
 \operatorname{Ai}_p(X)=\sum_{n\geq0}
       \frac{p^{n(n-1)/2}X^n}{(p^2;p^2)_n}.
\]
Its elementary difference equation is
\begin{equation}\label{eq:app-Ai-contiguity}
 \operatorname{Ai}_p(X)-\operatorname{Ai}_p(p^2X)
       =X\operatorname{Ai}_p(pX).
\end{equation}
Consequently the function
\[
 \omega(X)=\operatorname{Ai}_p(X)\operatorname{Ai}_p(-pX)
       +\operatorname{Ai}_p(-X)\operatorname{Ai}_p(pX)
\]
satisfies \(\omega(pX)=\omega(X)\). Comparing coefficients shows that \(\omega\) is constant, hence
\begin{equation}\label{eq:app-Ai-Wronskian}
 \omega(X)=\omega(0)=2.
\end{equation}

Let \(t\) satisfy \(t^4=q\), and define
\[
 \mathcal U(t;x)=\sum_{m,n\geq0}
 \frac{(-1)^m t^{m^2+6mn+3n^2+6n}x^m}
      {(t^4;t^4)_m(t^{12};t^{12})_n},
\]
\[
 \mathcal A(x)=\frac{\mathcal U(t;x)+\mathcal U(-t;x)}2,
 \qquad
 \mathcal B(x)=\frac{\mathcal U(t;x)-\mathcal U(-t;x)}{2t},
 \qquad \mathbf r(x)=(\mathcal A(x),\mathcal B(x)).
\]
Every exponent of \(\mathcal U\) is either \(0\) or \(1\pmod4\).
Thus \(\mathcal A,\mathcal B\) are power series in \(q=t^4\) and \(x\).

Let \(J_m=\operatorname{Ai}_{t^6}(t^{6m+9})\).
The coefficient of \(x^m\) in \(\mathcal U(t;x)\) is
\((-1)^m t^{m^2}J_m/(q;q)_m\).
Equation \eqref{eq:app-Ai-contiguity} gives
\(J_{m-2}-J_m=t^{6m-3}J_{m-1}\), so these coefficients satisfy
the same recurrence as \(g_m\) in Section~\ref{app:addition}.
Both \(\mathcal A\) and \(\mathcal B\) therefore satisfy
\eqref{eq:app-scalar-equation}.  Moreover,
\eqref{eq:app-Ai-Wronskian} at \(p=t^6,X=t^9\) gives
\begin{equation}\label{eq:app-parity-initial-determinant}
 \det\begin{pmatrix}\mathbf r(0)\\\mathbf r'(0)\end{pmatrix}
 =-\frac{
 \operatorname{Ai}_{t^6}(t^9)\operatorname{Ai}_{t^6}(-t^{15})
 +\operatorname{Ai}_{t^6}(-t^9)\operatorname{Ai}_{t^6}(t^{15})}
 {2(1-q)}=-\frac1{1-q}.
\end{equation}

For each positive integer $j$, define
\[
 W_j(x)=\det\begin{pmatrix}\mathbf r(x)\\\mathbf r(q^jx)\end{pmatrix}.
\]
Equation \eqref{eq:app-parity-initial-determinant} gives
\(W_1(x)=x+O(x^2)\). Put \(H(x)=W_1(x)/x\).
Both components of \(\mathbf r\) satisfy \eqref{eq:app-scalar-equation}, so
\[
 q\mathbf r(x)-(1+q+q^2x^2)\mathbf r(qx)
 +(1-q^2x)\mathbf r(q^2x)+q^2x\mathbf r(q^3x)=0.
\]
Taking the left-hand side as the first row and \(\mathbf r(qx)\), \(\mathbf r(q^2x)\)
as the second row, respectively, and expanding the determinants gives
\begin{align*}
 q^2x W_2(qx)&=qW_1(x)-(1-q^2x)W_1(qx),\\
 qW_2(x)&=(1+q+q^2x^2)W_1(qx)+q^2xW_1(q^2x).
\end{align*}
Eliminating \(W_2\), substituting \(W_1(x)=xH(x)\),
and cancelling \(qx\) gives
\begin{equation}\label{eq:app-exterior-equation}
 H(x)=(1-q^2x)H(qx)
 +q^2x(1+q+q^4x^2)H(q^2x)+q^6x^2H(q^3x).
\end{equation}
For each positive degree \(j\), the new coefficient is multiplied
by \(1-q^j\ne0\).  Thus the coefficients of \(H\) are determined successively by its constant term.
Equation \eqref{eq:app-p-recurrence}, with \(x\) replaced by \(qx\),
shows that \(\mathsf F(q^2x,q^3x^3)\) is this solution.  Therefore
\begin{equation}\label{eq:app-Casoratian}
 \det\begin{pmatrix}
 \mathcal A(x)&\mathcal B(x)\\
 \mathcal A(qx)&\mathcal B(qx)
 \end{pmatrix}
 =x\mathsf F(q^2x,q^3x^3).
\end{equation}

In particular, at \(x=1\),
\begin{equation}\label{eq:source-minors}
 \det\begin{pmatrix}\mathcal A(1)&\mathcal B(1)\\\mathcal A(q)&\mathcal B(q)\end{pmatrix}=\mathsf F(q^2,q^3)=\mathsf C.
\end{equation}

\section{Connection formula}
\label{app:Morita}

Define the theta function by
\[
 \theta_p(z)=\sum_{k\in\mathbb Z}p^{k(k-1)/2}z^k.
\]
Define the Ramanujan function by
\[
 \mathcal R_q(z)=\sum_{n\geq0}\frac{q^{n^2}(-z)^n}{(q;q)_n}.
\]
The connection formula \cite[Section~1, Theorem]{Morita2011}, with the
\(q\)-Airy convention of Section~\ref{app:casoratian}, is
\begin{equation}\label{eq:app-Morita}
 \theta_p(X/p)\operatorname{Ai}_p(-X)
 +\theta_p(-X/p)\operatorname{Ai}_p(X)
 =2(p^2;p^2)_\infty\mathcal R_{p^2}(-p^3/X^2).
\end{equation}
The factor in the cited formula is
\((p,-1;p)_\infty=2(p^2;p^2)_\infty\).

The definitions of \(\mathcal A,\mathcal B\) give
\begin{equation}\label{eq:parity-recombination}
 \mathcal U(\pm t;1)=\mathcal A(1)\pm t\mathcal B(1),\qquad
 \mathcal U(\pm t;q)=\mathcal A(q)\pm t\mathcal B(q).
\end{equation}
To evaluate \(\mathcal U(t;q^j)\) for \(j=0,1\), first use
\(p=t^2\), \(X=t^{6n+4j+1}\).
Quasiperiodicity gives
\[
 \theta_{t^2}(\pm t^{6n+4j-1})
       =(\pm1)^n L_n\theta_{t^2}(\pm t^{4j-1}),
 \qquad L_n=t^{-9n^2+(6-12j)n}.
\]
Multiply \eqref{eq:app-Morita} by
\(t^{3n^2+6n}/((t^{12};t^{12})_n L_n)\), and sum over \(n\geq0\).
The exponent on the right is
\[
 4m^2-12mn+12n^2+(4-8j)m+12jn.
\]
At \(j=0,1\), respectively, it gives
\(\mathsf G(q,1)\) and \(\mathsf G(q^{-1},q^3)\).
The symmetry
\(\theta_{t^2}(\pm t^3)
 =\theta_{t^2}(\pm t^{-1})\) therefore gives
\begin{align}
 \theta_{t^2}(t^{-1})\mathcal U(t;1)
 +\theta_{t^2}(-t^{-1})\mathcal U(-t;1)
       &=2(q;q)_\infty\mathsf G(q,1),\notag\\
 \theta_{t^2}(t^{-1})\mathcal U(t;q)
 +\theta_{t^2}(-t^{-1})\mathcal U(-t;q)
       &=2(q;q)_\infty\mathsf G(q^{-1},q^3).
       \label{eq:app-short-Morita}
\end{align}

Next use \(p=t^6\), \(X=t^{6m+9}\).  In this case
\[
 \theta_{t^6}(\pm t^{6m+3})
       =(\pm1)^m t^{-3m^2}\theta_{t^6}(\pm t^3).
\]
Multiply by \(t^{4m^2+4jm}/(t^4;t^4)_m\), and sum over \(m\geq0\).
The exponent on the right is
\(4m^2-12mn+12n^2+4jm\).  Thus
\begin{align}
 \theta_{t^6}(-t^3)\mathcal U(t;1)
 +\theta_{t^6}(t^3)\mathcal U(-t;1)
       &=2(q^3;q^3)_\infty\mathsf G(1,1),\notag\\
 \theta_{t^6}(-t^3)\mathcal U(t;q)
 +\theta_{t^6}(t^3)\mathcal U(-t;q)
       &=2(q^3;q^3)_\infty\mathsf G(q,1).
       \label{eq:app-long-Morita}
\end{align}

Specializing \(\theta_p(z)\) to base \(q^2\) and arguments \(q\) and \(1\) gives
\[
 \theta_{q^2}(q)=\sum_{n\in\mathbb Z}q^{n^2},\qquad
 \theta_{q^2}(1)=\sum_{n\in\mathbb Z}q^{n(n-1)}.
\]
Even and odd index separation gives
\[
 \theta_{t^2}(\pm t^{-1})
       =\theta_{q^2}(1)\pm t^{-1}\theta_{q^2}(q),\qquad
 \theta_{t^6}(\pm t^3)
       =\theta_{q^6}(q^3)\pm t^3\theta_{q^6}(1).
\]
Using \eqref{eq:parity-recombination} in
\eqref{eq:app-short-Morita}--\eqref{eq:app-long-Morita} consequently
proves
\begin{equation}\label{eq:app-source-matrix}
 \begin{pmatrix}\mathcal A(1)&\mathcal B(1)\\\mathcal A(q)&\mathcal B(q)\end{pmatrix}
 \begin{pmatrix}\theta_{q^2}(1)&\theta_{q^6}(q^3)\\
                 \theta_{q^2}(q)&-q\theta_{q^6}(1)\end{pmatrix}
 =\begin{pmatrix}
 (q;q)_\infty\mathsf G(q,1)&(q^3;q^3)_\infty\mathsf G(1,1)\\
 (q;q)_\infty\mathsf G(q^{-1},q^3)&(q^3;q^3)_\infty\mathsf G(q,1)
 \end{pmatrix}.
\end{equation}

\section{A cubic identity for the products}\label{sec:product}

Define the theta series \(\mathfrak a(q)\) and the Lambert series \(L(q)\) by
\[
 \mathfrak a(q)=\sum_{r,s\in\mathbb Z}q^{r^2+rs+s^2},\qquad
 L(q)=\sum_{n\ge1}\frac{q^n}{(1-q^n)^2}.
\]
We use the square identity \cite[Theorem 7.2]{Matsuda2020}
\begin{equation}\label{eq:formal-square}
 \mathfrak a(q)^2=1+12L(q)-36L(q^3)
\end{equation}
and the trisection identity \cite[equation (2.4), with $q$ replaced by $q^3$]{BorweinBorweinGarvan1994}
\begin{equation}\label{eq:formal-trisection}
 (\mathfrak a(q)-\mathfrak a(q^3))(q^3;q^3)_\infty
 =6q(q^9;q^9)_\infty^3.
\end{equation}

The Jacobi triple product identity gives
\begin{equation}\label{eq:formal-jacobi}
 \theta_q(z)=(q,-z,-q/z;q)_\infty.
\end{equation}
It satisfies
\begin{equation}\label{eq:formal-jacobi-shifts}
 \theta_q(qz)=z^{-1}\theta_q(z),\qquad
 \theta_q(z^{-1})=z^{-1}\theta_q(z),\qquad
 \theta_q(q/z)=\theta_q(z).
\end{equation}

The Weierstrass addition formula gives
\begin{equation}\label{eq:formal-addition}
 \begin{aligned}
 &\theta_{\rho}(-xz)\theta_{\rho}(-x/z)\theta_{\rho}(-y)^2
 -\theta_{\rho}(-yz)\theta_{\rho}(-y/z)\theta_{\rho}(-x)^2\\
 &\hspace{15mm}=\frac yz \theta_{\rho}(-xy)\theta_{\rho}(-x/y)\theta_{\rho}(-z)^2.
 \end{aligned}
\end{equation}

Define
\[
 \Sigma(u;\rho)=\sum_{n\ge0}\left(
 \frac{u\rho^n}{(1-u\rho^n)^2}
 +\frac{\rho^{n+1}/u}{(1-\rho^{n+1}/u)^2}\right).
\]
Expanding the paired factors of the product at $z=1+\varepsilon$ gives
\begin{equation}\label{eq:formal-kernel}
 \theta_{\rho}(-u(1+\varepsilon))\theta_{\rho}(-u/(1+\varepsilon))
 =\theta_{\rho}(-u)^2\bigl(1-\Sigma(u;\rho)\varepsilon^2\bigr)
 \pmod{\varepsilon^3},
\end{equation}
since $(1-bz)(1-b/z)/(1-b)^2=1-b(z+z^{-1}-2)/(1-b)^2$ and $z+z^{-1}-2=\varepsilon^2\pmod{\varepsilon^3}$.

\begin{proposition}\label{prop:mc-product-cubic}
The three products satisfy
\begin{equation}\label{eq:mc-product-norm}
 \mathcal N(\KR_1,\KR_2,\KR_3)
 =\frac{\mathfrak a(q)}{(q;q)_\infty(q^3;q^3)_\infty}.
\end{equation}
\end{proposition}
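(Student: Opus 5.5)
The plan is to write everything in terms of theta functions of base \(\rho=q^9\), use the addition formula \eqref{eq:formal-addition} to turn the cubic into a single theta quotient, and then use the kernel expansion \eqref{eq:formal-kernel} to match that quotient with \(\mathfrak a(q)\) through \eqref{eq:formal-square}–\eqref{eq:formal-trisection}. By \eqref{eq:formal-jacobi}, \(\theta_{q^9}(-q^a)=(q^9,q^a,q^{9-a};q^9)_\infty\). Since \((q^3,q^6;q^9)_\infty=(q^3;q^3)_\infty/(q^9;q^9)_\infty\), we get
\[
 \KR_i=\frac{(q^9;q^9)_\infty^2}{(q^3;q^3)_\infty\,\theta_{q^9}(-q^{a_i})},\qquad (a_1,a_2,a_3)=(1,2,4).
\]
Write \(\vartheta_a=\theta_{q^9}(-q^a)\) and clear denominators. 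Then \eqref{eq:mc-product-norm} becomes the purely theta-theoretic identity
\[
 \vartheta_2^3\vartheta_4^3+q\vartheta_1^3\vartheta_4^3-q^2\vartheta_1^3\vartheta_2^3+3q\vartheta_1^2\vartheta_2^2\vartheta_4^2
 =\frac{\mathfrak a(q)(q^3;q^3)_\infty^2}{(q;q)_\infty(q^9;q^9)_\infty^6}\,\vartheta_1^3\vartheta_2^3\vartheta_4^3 .
\]
Here \((q;q)_\infty\) is itself a product of the \(\vartheta_a\) and \((q^3;q^3)_\infty\)-type factors.

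\emph{Step 1: the linear relation.} Specialize \eqref{eq:formal-addition} with \(\rho=q^9\) and \(x,y,z\) among \(q,q^2,q^4\) (for example \(x=q^2\), \(y=q\), \(z=q^3\)). Reduce the resulting arguments \(q^{3},q^5,q^6,\dots\) back to \(\vartheta_1,\vartheta_2,\vartheta_4,\theta_{q^9}(-q^3)\) using \eqref{eq:formal-jacobi-shifts}. The expected output is the three-term relation \(\KR_1\KR_2^2-\KR_1^2\KR_3+q\KR_2\KR_3^2=0\) quoted in the Remark. With it, I will eliminate one monomial class and rewrite \(\mathcal N(\KR_1,\KR_2,\KR_3)\) as a simpler expression, ideally a single product times a quadratic form in the \(\vartheta_a\).

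\emph{Step 2: identifying \(\mathfrak a(q)\).} I will not match products directly. Instead I will square the reduced expression and compare with \eqref{eq:formal-square}. Setting \(z=1+\varepsilon\) in \eqref{eq:formal-addition} and extracting the \(\varepsilon^2\) coefficient via \eqref{eq:formal-kernel} gives identities of the form
\[
 \Sigma(q^a;q^9)-\Sigma(q^b;q^9)=(\text{theta quotient}).
\]
Summing \(\Sigma(q^a;q^9)\) over all residues \(a\bmod 9\) produces \(L(q)\). Summing over multiples of \(3\) produces \(L(q^3)\)-type terms, while the residue-\(0\) contribution is handled separately. Hence \(1+12L(q)-36L(q^3)\) should be expressible through \(\Sigma(q;q^9)\), \(\Sigma(q^2;q^9)\), \(\Sigma(q^4;q^9)\) and \(\Sigma(q^3;q^9)\), and then, through these theta quotients, as the square of the right-hand side of \eqref{eq:mc-product-norm} times the appropriate factor. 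The cross terms involving \(\theta_{q^9}(-q^3)\) and \((q^9;q^9)_\infty^3/(q^3;q^3)_\infty\) will be absorbed using the trisection \eqref{eq:formal-trisection}, which writes \(\mathfrak a(q)=\mathfrak a(q^3)+6q(q^9;q^9)^3_\infty/(q^3;q^3)_\infty\).

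\emph{Step 3: fixing the sign.} Once both sides of \eqref{eq:mc-product-norm} are shown to have equal squares, the sign is fixed by comparing constant terms: both sides equal \(1+O(q)\). Since we work in the integral domain of formal power series, equality of squares together with equal constant terms forces equality of the series themselves.

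The main obstacle is Step 2. This is bookkeeping of which specializations of \eqref{eq:formal-addition} and \eqref{eq:formal-kernel} yield exactly the Lambert-series combination \(12L(q)-36L(q^3)\). If squaring proves unwieldy, my fallback is to compare \(\mathcal N(\KR_1,\KR_2,\KR_3)\,(q;q)_\infty(q^3;q^3)_\infty\) with \(\mathfrak a(q^3)+6q(q^9;q^9)^3_\infty/(q^3;q^3)_\infty\) directly. That requires expressing \(\mathfrak a(q^3)\) as a theta quotient in base \(q^9\), using one more specialization of \eqref{eq:formal-addition}.
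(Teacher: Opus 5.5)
You have the right ingredients, but there is a genuine gap: the step that actually produces \(\mathcal N\) is missing, and the steps you put in its place do not work.

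Step~1 cannot simplify \(\mathcal N\). The relation \(\KR_1\KR_2^2-\KR_1^2\KR_3+q\KR_2\KR_3^2=0\) involves only the monomials \(XY^2,X^2Z,YZ^2\), which are disjoint from the monomials \(X^3,Y^3,Z^3,XYZ\) of \(\mathcal N\). Its only cubic multiples are scalar multiples, so it cannot eliminate anything from \(\mathcal N\). (Your sample choice \((x,y,z)=(q^2,q,q^3)\) actually gives \(J_4(J_2^3-qJ_1^3)=J_1J_3^3\), with \(J_a=\theta_{q^9}(-q^a)\). The three-term relation comes from \((q^2,q^4,q)\) after cancelling \(J_3\), and the paper uses it only in the Remark.)

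Step~2 does not close either. Write \(\Lambda_a=\Sigma(q^a;q^9)\). The kernel identities only produce differences \(\Lambda_a-\Lambda_b\), while the square identity gives \(\mathfrak a(q)^2=\mathfrak a(q^3)^2+12(\Lambda_1+\Lambda_2+\Lambda_4-3\Lambda_3)\). The leftover \(\mathfrak a(q^3)^2\) is not reached by the kernel identities. It is also not a theta quotient: \(\mathfrak a\) vanishes at \(q=-e^{-\pi/\sqrt3}\), whereas quotients of the \(\theta_{q^9}(-q^a)\) have no zeros in \(|q|<1\). This also rules out your fallback as stated. The only tool that disposes of \(\mathfrak a(q^3)^2\) is the trisection. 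Once you use it, the identity becomes linear in \(\mathfrak a(q)\), so squaring and the sign argument of Step~3 play no role.

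The missing idea is to take \(y=q^3\) in \eqref{eq:formal-addition}. With \((\rho,x,y)=(q^9,q^r,q^3)\) and \(z=1+\varepsilon\), the \(\varepsilon^2\)-coefficient, via \eqref{eq:formal-kernel}, is
\[
 J_r^2(q^3;q^3)_\infty^2(\Lambda_r-\Lambda_3)
 =-q^3(q^9;q^9)_\infty^6\,\theta_{q^9}(-q^{r+3})\,\theta_{q^9}(-q^{r-3}).
\]
Use reflection and \(J_1J_2J_4=(q;q)_\infty(q^9;q^9)_\infty^3/(q^3;q^3)_\infty\), and put \(c=(q;q)_\infty(q^9;q^9)_\infty^3\). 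For \(r=1,2,4\) this reads \(\Lambda_r-\Lambda_3=q\,c\,\KR_1^3\), \(q^2c\,\KR_2^3\), \(-q^3c\,\KR_3^3\), respectively. So the cubes in \(\mathcal N\) appear directly. Summing gives
\[
 L(q)-4L(q^3)+3L(q^9)=q\,c\,\bigl(\mathcal N(\KR_1,\KR_2,\KR_3)-3q\KR_1\KR_2\KR_3\bigr),
\]
and \(\KR_1\KR_2\KR_3\) is a pure product. Thus \(\mathcal N\) is \emph{linear} in Lambert series. Now write \(12\bigl(L(q)-4L(q^3)+3L(q^9)\bigr)=(\mathfrak a(q)-\mathfrak a(q^3))(\mathfrak a(q)+\mathfrak a(q^3))\) and apply \eqref{eq:formal-trisection} to the first factor. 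This gives \eqref{eq:mc-product-norm} directly, with no squaring.
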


\begin{proof}
For $1\le r<9$, write $J_r=\theta_{q^9}(-q^r)$ and $\Lambda_r=\Sigma(q^r;q^9)$. Splitting the products by residue classes gives
\begin{gather*}
 J_3=(q^3;q^3)_\infty,\qquad
 J_1J_2J_4=\frac{(q;q)_\infty(q^9;q^9)_\infty^3}{(q^3;q^3)_\infty},\\
 (\KR_1,\KR_2,\KR_3)
 =\frac{(q^9;q^9)_\infty^2}{(q^3;q^3)_\infty}
 \left(\frac1{J_1},\frac1{J_2},\frac1{J_4}\right),\\
 \KR_1\KR_2\KR_3=\frac{(q^9;q^9)_\infty^3}{(q;q)_\infty(q^3;q^3)_\infty^2}.
\end{gather*}
In \eqref{eq:formal-addition}, substitute $(\rho,x,y)=(q^9,q^r,q^3)$ and $z=1+\varepsilon$. The second coefficient, using \eqref{eq:formal-kernel}, gives
\[
 J_r^2(q^3;q^3)_\infty^2(\Lambda_r-\Lambda_3)
 =-q^3(q^9;q^9)_\infty^6\theta_{q^9}(-q^{r+3})\theta_{q^9}(-q^{r-3}).
\]
For $r=1,2,4$, reflection and the preceding product relations simplify these identities to
\begin{align*}
 \Lambda_1-\Lambda_3&=q(q;q)_\infty(q^9;q^9)_\infty^3\KR_1^3,\\
 \Lambda_2-\Lambda_3&=q^2(q;q)_\infty(q^9;q^9)_\infty^3\KR_2^3,\\
 \Lambda_4-\Lambda_3&=-q^3(q;q)_\infty(q^9;q^9)_\infty^3\KR_3^3.
\end{align*}
Their sum has left side $L(q)-4L(q^3)+3L(q^9)$. Multiplying by $(q^3;q^3)_\infty^2$ and using the product of the three $\KR_i$ gives
\begin{equation}\label{eq:formal-norm-reduction}
 \begin{aligned}
 &(q^3;q^3)_\infty^2\bigl(L(q)-4L(q^3)+3L(q^9)\bigr)\\
 &\quad=q(q;q)_\infty(q^3;q^3)_\infty^2(q^9;q^9)_\infty^3
 \mathcal N(\KR_1,\KR_2,\KR_3)
 -3q^2(q^9;q^9)_\infty^6.
 \end{aligned}
\end{equation}

It remains to evaluate the Lambert expression. Subtracting \eqref{eq:formal-square} at $q^3$ from the same identity at $q$ yields
\[
 12\bigl(L(q)-4L(q^3)+3L(q^9)\bigr)
 =(\mathfrak a(q)-\mathfrak a(q^3))(\mathfrak a(q)+\mathfrak a(q^3)).
\]
Applying \eqref{eq:formal-trisection} and writing the sum of the two theta series as $2\mathfrak a(q)-(\mathfrak a(q)-\mathfrak a(q^3))$ gives
\[
 (q^3;q^3)_\infty^2\bigl(L(q)-4L(q^3)+3L(q^9)\bigr)
 =q\mathfrak a(q)(q^3;q^3)_\infty(q^9;q^9)_\infty^3
 -3q^2(q^9;q^9)_\infty^6.
\]
Comparison with \eqref{eq:formal-norm-reduction} and cancellation proves the proposition.
\end{proof}

\section{Proof of the main theorem}
\label{sec:positive-completion}

Taking determinants in \eqref{eq:app-source-matrix} and applying
Proposition~\ref{prop:mc-product-cubic} gives the following equalities.

\begin{lemma}[Equality of the cubic norms]\label{lem:equal-norms}
As identities of formal power series,
\begin{equation}\label{eq:equal-norms}
 \mathcal N(\mathsf A,\mathsf B,\mathsf C)
 =\frac{\mathfrak a(q)}{(q;q)_\infty(q^3;q^3)_\infty}
 =\mathcal N(\KR_1,\KR_2,\KR_3).
\end{equation}
\end{lemma}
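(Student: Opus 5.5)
Taking determinants in \eqref{eq:app-source-matrix} gives
\[
 \det M\cdot\det\Theta=(q;q)_\infty(q^3;q^3)_\infty\bigl(\mathsf G(q,1)^2-\mathsf G(1,1)\mathsf G(q^{-1},q^3)\bigr),
\]
where \(M\) is the matrix of values of \(\mathcal A,\mathcal B\) and \(\Theta\) is the theta matrix. The second equality in \eqref{eq:equal-norms} is exactly Proposition~\ref{prop:mc-product-cubic}, so only the first equality needs proof.

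\textbf{Step 1: the determinant of \(M\).} By \eqref{eq:source-minors}, \(\det M=\mathsf C\).

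\textbf{Step 2: the determinant of \(\Theta\).} We have
\[
 \det\Theta=-q\theta_{q^2}(1)\theta_{q^6}(1)-\theta_{q^2}(q)\theta_{q^6}(q^3).
\]
Using \(\theta_{q^2}(q)=\sum q^{n^2}\) and \(\theta_{q^6}(q^3)=\sum q^{3n^2}\), the second product is \(\sum_{a,b}q^{a^2+3b^2}\). Likewise \(q\theta_{q^2}(1)\theta_{q^6}(1)=\sum_{a,b}q^{(a-1/2)^2\cdot}\cdots\); written out, it is \(\sum q^{a^2-a+3b^2-3b+1}\), which after substituting \(a\mapsto\) half-integers becomes the sum of \(q^{a'^2+3b'^2}\) over odd \(2a',2b'\) (up to a factor). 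The classical fact is that \(\mathfrak a(q)=\sum_{x\equiv y\ (2)}q^{(x^2+3y^2)/4}\), which is the sum of the two parity classes. So \(\det\Theta=-\mathfrak a(q)\). The sign will be checked by comparing constant terms: \(\det\Theta=-1+O(q)\).

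\textbf{Step 3: the \(\mathsf G\) side.} Substituting \eqref{eq:app-quadratic-bridge} gives
\[
 \mathsf G(q,1)^2-\mathsf G(1,1)\mathsf G(q^{-1},q^3)=(\mathsf A\mathsf B-q\mathsf C^2)^2-(\mathsf A^2+q\mathsf B\mathsf C)(\mathsf B^2+\mathsf A\mathsf C).
\]
Expanding, the \(\mathsf A^2\mathsf B^2\) terms cancel, and the remainder is
\[
 -2q\mathsf A\mathsf B\mathsf C^2+q^2\mathsf C^4-\mathsf A^3\mathsf C-q\mathsf B^3\mathsf C-q\mathsf A\mathsf B\mathsf C^2=-\mathsf C\,\mathcal N(\mathsf A,\mathsf B,\mathsf C).
\]

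\textbf{Step 4: cancellation.} Combining the three steps,
\[
 \mathsf C\cdot(-\mathfrak a(q))=-(q;q)_\infty(q^3;q^3)_\infty\,\mathsf C\,\mathcal N(\mathsf A,\mathsf B,\mathsf C).
\]
Since \(\mathsf C=1+O(q)\) is a unit in the ring of formal power series, we may cancel it, which yields the first equality in \eqref{eq:equal-norms}.

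The main obstacle is Step 2: identifying \(\det\Theta\) with \(-\mathfrak a(q)\), including the sign and the parity-class decomposition of \(\mathfrak a(q)\). To handle it, I would use the substitution \(r=(x+y)/2\), \(s=(x-y)/2\) in the form \(r^2+rs+s^2\), which gives \((3x^2+y^2)/4\) over \(x\equiv y\pmod 2\).
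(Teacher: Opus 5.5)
Your proposal is correct and follows the paper's proof. Both take determinants in \eqref{eq:app-source-matrix}, use $\det M=\mathsf C$ from \eqref{eq:source-minors}, identify $\det\Theta=-\mathfrak a(q)$ through the parity decomposition $\mathfrak a(q)=\theta_{q^2}(q)\theta_{q^6}(q^3)+q\theta_{q^2}(1)\theta_{q^6}(1)$, substitute \eqref{eq:app-quadratic-bridge} to get $-\mathsf C\,\mathcal N(\mathsf A,\mathsf B,\mathsf C)$, and cancel the unit $\mathsf C$. Your substitution $(r,s)=((x+y)/2,(x-y)/2)$ is only a relabeling of the paper's split by the parity of $s$, and the ``up to a factor'' hedge in Step~2 is unnecessary, since $a^2-a+3b^2-3b+1=(a-\tfrac12)^2+3(b-\tfrac12)^2$ exactly.
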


\begin{proof}
Splitting the summation lattice according to the parity of \(s\)
gives
\begin{equation}\label{eq:a2-parity}
 \mathfrak a(q)=\theta_{q^2}(q)\theta_{q^6}(q^3)
                   +q\theta_{q^2}(1)\theta_{q^6}(1).
\end{equation}
Indeed, the substitutions \((r,s)=(\ell-k,2k)\) and
\((r,s)=(\ell-k,2k-1)\) give the exponents
\(\ell^2+3k^2\) and
\(\ell(\ell-1)+3k(k-1)+1\), respectively. Taking determinants on both sides of
\eqref{eq:app-source-matrix}, and using \eqref{eq:source-minors}
and \eqref{eq:a2-parity}, gives
\[
 \mathsf G(q,1)^2-\mathsf G(1,1)\mathsf G(q^{-1},q^3)
 =-\frac{\mathsf C\mathfrak a(q)}{(q;q)_\infty(q^3;q^3)_\infty}.
\]
Substituting \eqref{eq:app-quadratic-bridge} yields
\[
 (\mathsf A\mathsf B-q\mathsf C^2)^2
 -(\mathsf A^2+q\mathsf B\mathsf C)(\mathsf B^2+\mathsf A\mathsf C)
 =-\frac{\mathsf C\mathfrak a(q)}{(q;q)_\infty(q^3;q^3)_\infty}.
\]
The polynomial identity
\begin{equation}\label{eq:norm-determinant-factorization}
 (XY-qZ^2)^2-(X^2+qYZ)(Y^2+XZ)
       =-Z\mathcal N(X,Y,Z)
\end{equation}
therefore proves the first equality in \eqref{eq:equal-norms}.
Here we cancel \(\mathsf C\), whose constant coefficient is \(1\).

The second equality is Proposition~\ref{prop:mc-product-cubic}.
\end{proof}

One further input supplies the direction of every possible
discrepancy between the two triples.

\begin{lemma}[Coefficientwise lower bounds]\label{lem:KR-lower-bounds}
The three differences satisfy
\begin{equation}\label{eq:KR-lower-bounds}
 \mathsf A-\KR_1,\quad \mathsf B-\KR_2,\quad
 \mathsf C-\KR_3\ \in q\mathbb Z_{\geq0}[[q]].
\end{equation}
\end{lemma}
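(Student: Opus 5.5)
The plan is to deduce Lemma~\ref{lem:KR-lower-bounds} from Tsuchioka's coefficientwise inequalities \cite{Tsuchioka2022}, translated through Kur\c{s}ung\"oz's generating functions \cite[Theorems 7--9]{Kursungoz2019}. The introduction announced this step as the one place where Tsuchioka's work enters, so the argument is a matter of assembling these inputs.

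First, I use Kur\c{s}ung\"oz's identification to identify \(\mathsf A=\mathsf F(1,1)\), \(\mathsf B=\mathsf F(q,q^3)\), and \(\mathsf C=\mathsf F(q^2,q^3)\) as the generating functions of \(\mathcal D_1,\mathcal D_2,\mathcal D_3\). Similarly, \(\KR_i\) is the generating function of partitions into parts congruent to \(2^{i-1},3,6,9-2^{i-1}\pmod 9\). The statement is therefore the assertion that, for every \(N\), the number of partitions of \(N\) in \(\mathcal D_i\) is at least the number of such congruence partitions. Both series have constant term \(1\), coming from the empty partition, so the difference automatically lies in \(q\mathbb Z[[q]]\), and only nonnegativity needs proof.

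Second, I invoke Tsuchioka's theorem. As I recall, he proved exactly these inequalities: the Kanade--Russell mod \(9\) conjectures hold ``in one direction''. His proof goes through a vertex-operator or crystal-theoretic argument for the level-\(3\) modules of \(A_2^{(2)}\); the principally specialized characters are the products \(\KR_i\), and an explicit spanning set indexed by \(\mathcal D_i\) yields \(\dim\le|\mathcal D_i|\) in each degree. If his result is stated only as a character inequality, I would record that the principal specialization of the character equals \(\KR_i\) (the Lepowsky--Milne style product) and that his spanning set is indexed precisely by \(\mathcal D_i\), with the same initial conditions \(\lambda_1\ge i\).

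The main obstacle is bookkeeping rather than mathematics. I must confirm that Tsuchioka's partition conditions, and his matching of the three highest weights with \(i=1,2,3\), agree with \(\mathcal D_i\) exactly as defined here. That includes the difference-two-at-distance-two condition and the mod-\(3\) sum rule for close parts. If his indexing differs, for instance using a conjugate description or weight ordering, I would check a few coefficients (say up to \(q^{10}\)) to pin down the correspondence before citing.
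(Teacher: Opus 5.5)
Your proposal matches the paper's proof. You identify $\mathsf A,\mathsf B,\mathsf C$ as the generating functions of $\mathcal D_1,\mathcal D_2,\mathcal D_3$ via Kur\c{s}ung\"oz, setting the length variable to $1$ and matching the exponents. You then cite Tsuchioka's spanning result \cite[Corollary 1.4]{Tsuchioka2022} for the coefficientwise inequality against the congruence products $\KR_i$, and use the common constant term $1$ to land in $q\mathbb Z_{\geq0}[[q]]$.
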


\begin{proof}
Let \(\mathcal D_i\), \(i=1,2,3\), be the partition sets in the introduction.
Kur\c{s}ung\"oz proves that the generating functions
\(\sum_{\lambda\in\mathcal D_i}q^{|\lambda|}\) are
\(\mathsf A,\mathsf B,\mathsf C\), respectively
\cite[Theorems 7--9, equations (1), (38), (85)]{Kursungoz2019}.
In his two-variable formulas, the length variable is set to \(1\).
The three exponents, in the present indices \((m,n)\), are
\[
 m^2+3mn+3n^2,\qquad
 m^2+3mn+3n^2+m+3n,\qquad
 m^2+3mn+3n^2+2m+3n.
\]
These are exactly the exponents in the definitions of \(\mathsf A,\mathsf B,\mathsf C\).

Tsuchioka's spanning theorem implies that, for every \(N\geq0\),
the number of partitions of \(N\) in \(\mathcal D_i\) is at least
the number of partitions of \(N\) into parts congruent to
\(2^{i-1},3,6,9-2^{i-1}\pmod9\)
\cite[Corollary 1.4]{Tsuchioka2022}.
The latter generating functions are \(\KR_1,\KR_2,\KR_3\).
All six series have constant coefficient \(1\), proving
\eqref{eq:KR-lower-bounds}.
\end{proof}

\begin{lemma}[Rigidity under coefficientwise lower bounds]
\label{lem:positive-norm-rigidity}
Let \(X,Y,Z,x,y,z\in1+q\mathbb Z[[q]]\).  If
\(X-x,Y-y,Z-z\) have nonnegative coefficients and
\(\mathcal N(X,Y,Z)=\mathcal N(x,y,z)\), then
\((X,Y,Z)=(x,y,z)\).
\end{lemma}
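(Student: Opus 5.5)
Suppose \((X,Y,Z)\neq(x,y,z)\). Write \(X=x+\alpha\), \(Y=y+\beta\), \(Z=z+\gamma\), where \(\alpha,\beta,\gamma\in q\mathbb Z_{\geq0}[[q]]\) are not all zero. Let \(N\geq1\) be the smallest exponent at which at least one of \(\alpha,\beta,\gamma\) has a nonzero coefficient. Write \(a,b,c\geq0\) for the coefficients of \(q^N\) in \(\alpha,\beta,\gamma\); by the choice of \(N\), these are not all zero. The plan is to expand \(\mathcal N(X,Y,Z)-\mathcal N(x,y,z)\) and find its lowest-order term. We will show that this term lies in degree \(N\), \(N+1\), or \(N+2\), with a nonzero coefficient. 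That contradicts the hypothesis \(\mathcal N(X,Y,Z)=\mathcal N(x,y,z)\).

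The difference is
\[
\mathcal N(X,Y,Z)-\mathcal N(x,y,z)=(3x^2\alpha+3qy^2\beta-3q^2z^2\gamma+3q(yz\alpha+xz\beta+xy\gamma))+(\text{terms at least quadratic in }\alpha,\beta,\gamma).
\]
Every term that is quadratic or higher in \(\alpha,\beta,\gamma\) has order at least \(2N\geq N+2\) only when \(N\geq2\). This needs care for \(N=1\), so instead I track the exact orders. Modulo \(q^{N+1}\), the difference is \(3x^2\alpha\equiv 3a q^N\), since \(x\equiv1\) and all other terms carry an extra factor of \(q\) or have order at least \(2N\geq N+1\).

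If \(a>0\), the coefficient of \(q^N\) is \(3a\neq0\), a contradiction.

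If \(a=0\), then \(\alpha\) has order at least \(N+1\), and we look modulo \(q^{N+2}\). The contributions are \(3qy^2\beta\equiv 3bq^{N+1}\) and \(3q xz\beta\equiv 3b q^{N+1}\) from the linear part, and \(3x^2\alpha\) contributes its \(q^{N+1}\) coefficient \(3a'\geq0\). The term \(3qyz\alpha\) and the \(\gamma\)-terms \(3qxy\gamma\) and \(-3q^2z^2\gamma\) have order at least \(N+1\). Specifically, \(3qxy\gamma\) contributes \(3c\,q^{N+1}\), while \(-3q^2z^2\gamma\) is of order at least \(N+2\).

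The quadratic terms need checking. They are \(3x\alpha^2+\alpha^3\), \(3qy\beta^2+q\beta^3\), \(-q^2(3z\gamma^2+\gamma^3)\), and \(3q\) times the mixed products. All of these have order at least \(2N+1\geq N+2\) when \(a=0\), because each carries either a factor \(q\) or two factors of \(\alpha\). The coefficient of \(q^{N+1}\) is therefore \(3a'+6b+3c\), where \(a'\geq0\) is the coefficient of \(q^{N+1}\) in \(\alpha\). This is a sum of nonnegative terms, and it is positive because \(b\) or \(c\) is positive. This gives the contradiction.

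The main point to verify is exactly this bookkeeping: every contribution to the first possibly nonzero coefficient is nonnegative. The only negative sign in \(\mathcal N\) is on \(-q^2Z^3\), and it enters at strictly higher order, since its linear part \(-3q^2z^2\gamma\) has order \(N+2\). So all the terms that matter have the same sign, and no cancellation can occur. I would carry out the two cases \(a>0\) and \(a=0\) as above. Combined with Lemma~\ref{lem:equal-norms} and Lemma~\ref{lem:KR-lower-bounds}, this rigidity statement yields Theorem~\ref{thm:KR-evaluations}.
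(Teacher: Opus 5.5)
Your proof is correct and is essentially the paper's argument: both find the lowest nonvanishing coefficient of $\mathcal N(X,Y,Z)-\mathcal N(x,y,z)$, where $X-x$ enters with leading weight $3$ and $Y-y$, $Z-z$ enter one order later with weights $6$ and $3$ (your $3a'+6b+3c$). The paper writes the difference exactly as $d_1(X^2+Xx+x^2+3qYZ)+qd_2(Y^2+Yy+y^2+3xZ)+qd_3(3xy-q(Z^2+Zz+z^2))$ and uses the single weighted order $\nu=\min\{\ord_q d_1,1+\ord_q d_2,1+\ord_q d_3\}$, which absorbs the higher-order terms and merges your two cases; the one small imprecision in your write-up is that when $a=0$ the term $3qyz\alpha$ has order at least $N+2$, not merely $N+1$, and this is what justifies leaving it out of your tally.
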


\begin{proof}
Write \(d_1=X-x\), \(d_2=Y-y\), and \(d_3=Z-z\).
Telescoping the cubic and mixed terms gives the exact identity
\begin{equation}\label{eq:positive-norm-factorization}
\begin{aligned}
 \mathcal N(X,Y,Z)-\mathcal N(x,y,z)
 ={}&d_1\bigl(X^2+Xx+x^2+3qYZ\bigr)\\
 &+qd_2\bigl(Y^2+Yy+y^2+3xZ\bigr)\\
 &+qd_3\bigl(3xy-q(Z^2+Zz+z^2)\bigr).
\end{aligned}
\end{equation}
The three expressions in parentheses have constant coefficients
\(3,6,3\), respectively.  Suppose that at least one \(d_i\) is
nonzero, and define
\[
 \nu=\min\{\ord_qd_1,\ 1+\ord_qd_2,\ 1+\ord_qd_3\},
 \qquad \ord_q0=+\infty.
\]
Each summand of order \(\nu\) on the right of
\eqref{eq:positive-norm-factorization} has a positive coefficient
at that order.  Every other summand has greater order.  Thus the
coefficient of \(q^\nu\) is positive, contradicting the equality of
the norms.  All three differences vanish.
\end{proof}

\begin{proof}[Proof of Theorem~\ref{thm:KR-evaluations}]
Apply Lemma~\ref{lem:positive-norm-rigidity} to the equal norms in
Lemma~\ref{lem:equal-norms} and the lower bounds in
Lemma~\ref{lem:KR-lower-bounds}.
\end{proof}

\section*{Use of AI}
\label{sec:ai-discovery}

I would first like to describe how I came to ask AI to solve this conjecture.
I learned about the conjecture while I was a student in the Department of Mathematical and Computing Science
at Tokyo Institute of Technology (now Institute of Science Tokyo), and I remember hearing about it from Shunsuke Tsuchioka.
I do not remember exactly when I first heard of it, but hearing about it in Tsuchioka's lectures
(on Rogers--Ramanujan identities and related topics), which I attended via Zoom in 2020, left a lasting impression on me.

When I saw the conjecture, I noticed that the $q$-series on the sum side could be realized as
partition $q$-series of Kato and Terashima~\cite{KatoTerashima2015} in the context of cluster algebras,
which I was studying at the time (Terashima was then my supervisor).
This realization could be used to derive related dilogarithm identities, but it did not provide a way to prove
the original $q$-series identities. Ever since then, the conjecture had remained on my mind.
Its connection with cluster algebras was also one of the main motivations for my later paper~\cite{Mizuno2025}.
In that paper, I stated the modular transformation formulas for the products without proof.
I had in fact obtained a proof of the transformation formulas by expressing the products as double theta series,
but had not written it up because it lay outside the main subject of the paper and I thought it might be useful
for the Kanade--Russell conjecture itself.
Wang and Zhang~\cite{WangZhang2025} later proved these transformation formulas with a shorter proof than mine,
so in the end, I think it was good that I had left my proof out.

Because this history had kept the problem on my mind, I tried asking each new AI model whether it could solve the conjecture.
At first, I gave the models my own previous attempts (mostly recorded in SageMath files) and asked whether they could solve it.
In particular, I proposed relating the theta series on the product side to the sum side.
It did not take long for the files generated during their attempts to outnumber my own.
The first model to make progress that led directly to the present proof was GPT-5.6 Sol, which discovered on July 22, 2026 that the series
on the sum side could be expressed as a Casoratian, as in \eqref{eq:app-Casoratian}.
While exploring this direction, they arrived at the auxiliary series
\[
 V(x,z)=(q^2/z;q^3)_\infty
 \sum_{n\in\mathbb Z}\frac{q^{3n^2-2n}z^{-2n}}{(q^2/z;q^3)_n}
 \mathcal R_q(-q^{1-3n}xz),
\]
which incorporates the Ramanujan function into a bilateral Rogers--Ramanujan series
\cite[Theorem 2.1]{Schlosser2023}.
For negative integers \(n\), we use
\((a;q)_n=(a;q)_\infty/(aq^n;q)_\infty\).
We have
\[
 V(x,z)=\theta_{q^6}(qz^{-2})\mathcal A(x)
       -z^{-1}\theta_{q^6}(q^4z^{-2})\mathcal B(x).
\]
Thus the functions \(\mathcal A,\mathcal B\) from Section~\ref{app:casoratian}
are the connection coefficients in a common theta basis independent of \(x\).
Thus, for two values \(z_1,z_2\),
\[
 \begin{aligned}
 &\det\begin{pmatrix}
 V(x,z_1)&V(x,z_2)\\
 V(qx,z_1)&V(qx,z_2)
 \end{pmatrix}\\
 &\quad=\det\begin{pmatrix}
 \theta_{q^6}(qz_1^{-2})&\theta_{q^6}(qz_2^{-2})\\[4pt]
 -z_1^{-1}\theta_{q^6}(q^4z_1^{-2})&
 -z_2^{-1}\theta_{q^6}(q^4z_2^{-2})
 \end{pmatrix}
 \det\begin{pmatrix}
 \mathcal A(x)&\mathcal B(x)\\
 \mathcal A(qx)&\mathcal B(qx)
 \end{pmatrix}.
 \end{aligned}
\]
The first factor is independent of \(x\). This suggested studying the second determinant,
which is the left-hand side of \eqref{eq:app-Casoratian}.
Dividing it by \(x\) gives a function satisfying \eqref{eq:app-exterior-equation},
with constant term \(1\) by \eqref{eq:app-parity-initial-determinant}.
These properties identify it with \(\mathsf F(q^2x,q^3x^3)\).
This did not yet lead to a complete proof.
When GPT-6 Astra became available in September 2026, I asked, as usual, whether they could solve the conjecture.
This time, the attempt led to a complete proof.
The key discovery was the equality of cubic norms in Lemma~\ref{lem:equal-norms}.
A clue came from Wang and Wang's conjectural expressions for special values of \(\mathsf G\) in terms of infinite products~\cite[Conjecture 3.7]{WangWang2026}:
after simplification, their right-hand sides are the three quadratic expressions
in \eqref{eq:app-quadratic-bridge}, with the products \(\KR_1,\KR_2,\KR_3\) in place of the sums.
This suggested the identities \eqref{eq:app-quadratic-bridge}, which GPT-6 Astra proved using Proposition~\ref{prop:app-addition}.
They then took determinants in the matrix form \eqref{eq:app-source-matrix} of the connection formula
and used the Casoratian identity to obtain a relation among the three values of \(\mathsf G\).
Substituting \eqref{eq:app-quadratic-bridge} into this relation, its left-hand side factors as
\(-\mathsf C\mathcal N(\mathsf A,\mathsf B,\mathsf C)\) by \eqref{eq:norm-determinant-factorization},
revealing the cubic \(\mathcal N\).
This cubic essentially gives a theta function, which connects it to the product side.
Indeed, as explained below, it arises naturally from the modular transformation formulas for the products.
Put $q=e^{2\pi i\tau}$ with $\operatorname{Im}\tau>0$,
$\zeta=e^{2\pi i/9}$, and $\omega=\zeta^3$. The column vector
\[
 f(\tau)=(q^{-1/18}\KR_1,q^{5/18}\KR_2,q^{11/18}\KR_3)^{\mathsf T}
\]
satisfies $f(-1/(3\tau))=Sf(\tau)$~\cite{Mizuno2025,WangZhang2025}, where
\[
 S=\begin{pmatrix}
 \alpha_1&\alpha_2&\alpha_4\\
 \alpha_2&-\alpha_4&-\alpha_1\\
 \alpha_4&-\alpha_1&\alpha_2
 \end{pmatrix},\qquad
 \alpha_j=\frac{1}{2\sqrt3\sin(j\pi/9)}.
\]
For the linear forms $L_j:\mathbb C^3\to\mathbb C$,
$L_j(X,Y,Z)=X+\omega^jY-\omega^{2j}Z$ $(j=0,1,2)$, we have
\[
 \begin{gathered}
 L_0L_1L_2=X^3+Y^3-Z^3+3XYZ,\\
 L_0\circ S=L_0,\qquad
 L_1\circ S=\zeta L_2,\qquad
 L_2\circ S=\zeta^{-1}L_1.
 \end{gathered}
\]
Thus their product is invariant under $S$. Evaluating it at $f(\tau)$ gives
\[
 L_0(f(\tau))L_1(f(\tau))L_2(f(\tau))
 =q^{-1/6}\mathcal N(\KR_1,\KR_2,\KR_3).
\]
This recovers the cubic $\mathcal N$.
These transformation laws, combined with a comparison of the expansions at the cusps,
also give a modular proof of Proposition~\ref{prop:mc-product-cubic}.

This showed that the sum and product sides give the same value, establishing a clear relation between them.
GPT-6 Astra then combined this equality with Tsuchioka's coefficientwise inequalities
(Lemma~\ref{lem:KR-lower-bounds}) as the final piece of the proof, thereby proving that the two sides coincide.

GPT-6 Astra also formalized the three Kanade--Russell identities in Lean 4 based on this proof~\cite{palomar-2026-09-10-000002-v1}.

\printbibliography
\bigskip

\noindent
\textsc{Yuma Mizuno, School of Mathematical Sciences, University College Cork, Western Road,
Cork, Ireland.}\par
\noindent
\textit{Email address}: \texttt{YMizuno@ucc.ie}

\end{document}